\theoremstyle{definition} 
\newtheorem{lemma}{Lemma}[section] 
\newtheorem{corollary}[lemma]{Corollary} 
\newtheorem{theorem}[lemma]{Theorem} 
\newtheorem{proposition}[lemma]{Proposition}
\newtheorem{remark}[lemma]{Remark} 
\title[Enumerating rectangle tilings by straight polyominoes]{Generating functions for straight polyomino tilings of narrow rectangles}  
\author{Mudit Aggarwal}
\address{Indraprastha Institute of Information Technology Delhi (IIIT-Delhi), New Delhi 110020, India.}
\email{mudit19063@iiitd.ac.in}
\author{Samrith Ram}
\address{Indraprastha Institute of Information Technology Delhi (IIIT-Delhi), New Delhi 110020, India.}
\email{samrith@iiitd.ac.in}
\keywords{Tiling, generating function, polyomino tiling, brick tiling}    
\subjclass[2020]{05A15,05A19,05B45,05B50} %11B37    
\begin{document}
\begin{abstract}
Let $m,k$ be fixed positive integers. Determining the generating function for the number of tilings of an $m\times n$ rectangle by $k\times 1$ rectangles is a long-standing open problem to which the answer is only known in certain special cases. We give an explicit formula for this generating function in the case where $m<2k$. This result is used to obtain the generating function for the number of tilings of an $m\times n \times k$ box with $k\times k\times 1$ bricks. 
\end{abstract}
\maketitle
%\tableofcontents  
\section{Introduction and main results}
 We consider the problem of enumerating the number of tilings of an $m\times n$ rectangle with $k\times 1$ tiles. An example of such a tiling for $k=3,m=5$ and $n=6$ is shown in Figure \ref{fig:example}.
\begin{figure}[!ht]
  \centering
  \includegraphics{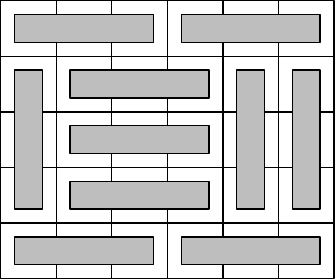}
  \caption{Tiling a $5\times 6$ rectangle with $3\times 1$ tiles.}
  \label{fig:example}
\end{figure}
It is known by a theorem of Klarner~\cite[Thm. 5]{MR248643} that such tilings exist if and only if $k$ divides either $m$ or $n$. Let $h_k(m,n)$ denote the number of such tilings. A beautiful result of Kasteleyn \cite{MR153427} and Temperley-Fisher \cite{MR136398} gives an explicit formula for $h_2(m,n)$:   
\begin{equation}
  \label{eq:dimers}
  h_2(m,n)= \prod_{j=1}^{\lceil \frac{m}{2}\rceil} \prod_{k=1}^{\lceil \frac{n}{2}\rceil}\left( 4 \cos^2\frac{j\pi}{m+1}+4\cos^2\frac{k\pi}{n+1}\right).
\end{equation}
For arbitrary integers $m,n$ no such formula is known for $h_k(m,n)$ for any integer $k\geq 3$. However, for fixed values of $m$ and $k$, the generating function $H_{k,m}(x)=\sum_{n\geq 0}h_k(m,n)x^n$ can be obtained by using the transfer-matrix method (see Stanley \cite[Sec. 4.7]{Stanley2012}) which expresses the number of tilings as the number of walks between two vertices in a suitably defined digraph. %This method can be used to show that $H_{k,m}(x)$ is a rational function.
Klarner and Pollack \cite{MR588907} computed $H_{2,m}(x)$ for $m\leq 8$ and %proved that $H_{2,m}(x)$ is a rational function.
 gave an algorithm to compute polynomials $P_m$ and $Q_m$ such that $H_{2,m}(x)=P_m(x)/Q_m(x)$. Hock and McQuistan \cite{MR739603} found recurrences for $h_2(m,n)$ in terms of $n$ for each integer $m\leq 10$. Stanley~\cite{MR798013} used the explicit formula \eqref{eq:dimers} above to determine the degrees of the polynomials $P_m(x)$ and $Q_m(x)$ and prove several properties of the polynomials $P_m(x)$ and $Q_m(x)$. 
% Pachter \cite{MR1479755} gave a purely combinatorial proof that $h_2(2n,2n)$ is of the form $2^n(2l+1)^2$ for some integer $l$.  
 By using the transfer-matrix approach Mathar \cite{mathar2014} derived several generating functions which enumerate tilings of the $m\times n$ rectangle with $a\times b$ tiles for fixed values of $a,b$. One of the drawbacks of the transfer-matrix approach is that  it involves the evaluation of large determinants for large values of the parameters.

For $k\geq 3$, there does not appear to have been substantial progress on the general problem of computing $H_{k,m}(x)$. In this paper we compute $H_{k,m}(x)$ for arbitrary positive integers $m$ and $k$ satisfying $m<2k$. The cases where $m\leq k$ are somewhat trivial but the case $k<m<2k$ is more interesting and we prove (see Theorem \ref{th:main1}) that the generating function is surprisingly simple:
\begin{align}
  \sum_{n\geq 0}h_k(m,n)x^n= \frac{(1-x^k)^{k-1}}{(1-x^k)^k - (m-k+1)x^k}.\label{eq:hk}
\end{align}
The generating function above allows for recursive computation of $h_k(m,n)$ and asymptotic estimates for large values of $n$ can be obtained by considering the smallest positive root of the denominator (see Flajolet and Sedgewick \cite[Ch. IV]{MR2483235}). In Theorem \ref{th:vert} we obtain a refinement of Equation \eqref{eq:hk} by proving that if $b_k(m,n,r)$ denotes the number of tilings in which precisely $r$ tiles are vertical, then  
$$
\sum_{n,r\geq 0}b_k(m,n,r)x^ny^r=  \frac{(1-x^k)^{k-1}}{(1-x^k)^k - (m-k+1)x^ky^k}.
$$
% We also consider tilings of an $m\times n$ rectangle with $k\times 1$ and $k\times k$ tiles. Denoting by $c_k(m,n,r,s)$ the number of such tilings which contain precisely $r$ vertically placed $k\times 1$ tiles and $s$ square tiles, we show (see Theorem \ref{th:vert2}) that
%     \begin{align*}
%       \sum_{n,r,s\geq 0}c_k(m,n,r,s)x^ny^rz^s=\hspace*{3in} \\ \frac{(1-x^k-x^kz)^{k-1}}{\left(1-x^k-(m-k+1)x^kz\right)(1-x^k-x^kz)^{k-1}-(m-k+1)x^ky^k}.
%     \end{align*}  
     
Under the same constraints on $m$, our results can also be used to derive the generating function (see Theorem \ref{th:3d}) for $\bar{h}_k(m,n)$, the number of tilings of an $m\times n\times k$ cuboid with $k\times k\times 1$ bricks:   
\begin{align*}
  \sum_{n\geq 0}\bar{h}_k(m,n)x^n = \frac{(1-2x^k)^{k-1}}{(1-2x^k)^{k-1}[1 - (m-k+2)x^k] - (m-k+1)x^k}.
\end{align*}
In this case we also obtain a refined multivariate generating function which accounts for tilings with a specific number of tiles in a given orientation. More precisely, we prove (see Corollary \ref{cor:vert3d}) that if $d_k(m,n,r,s)$ denotes the number of tilings of an $m\times n\times k$ cuboid with $k\times k\times 1$ bricks which contain precisely $r$ bricks parallel to the $yz$-plane and $s$ bricks parallel to the $xy$-plane, then
    \begin{align*}
      \sum_{n,r,s\geq 0}d_k(m,n,r,s)x^ny^rz^s=\hspace*{3in} \nonumber\\ \frac{(1-x^k-x^kz^k)^{k-1}}{\left(1-x^k-(m-k+1)x^kz^k\right)(1-x^k-x^kz^k)^{k-1}-(m-k+1)x^ky^k}.
    \end{align*}  
The key idea of our approach is to enumerate fault-free tilings which are explained in the next section. 
\section{Fault-free tilings}
Fix, throughout this paper, a positive integer $k\geq 2$. It will be convenient to introduce a coordinate system with the origin at the bottom left corner of the $m\times n$ rectangle such that a side of length $m$ of the rectangle is along the $y$-axis. Each of the unit squares, or cells, of the $m\times n$ rectangle is then represented by a pair $(r,s)$ corresponding to the coordinates of its top right corner. For instance the cells $(1,1)$ and $(3,4)$ are shaded in Figure \ref{fig:coordinates}. 
\begin{figure}[!ht]
  \centering
    \includegraphics{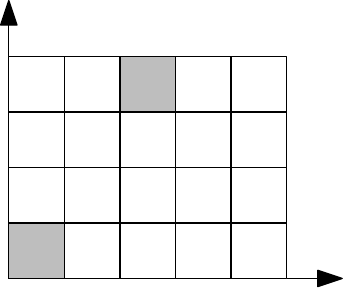}
  \caption{The cells $(1,1)$ and $(3,4)$ of a $4\times 5$ rectangle.} 
  \label{fig:coordinates}
\end{figure}

A tiling of the $m\times n$ rectangle is said to have a \emph{fault} at $x=a$ for some $1\leq a\leq n-1$ if the line $x=a$ does not intersect the interior of any tile. For instance, the tiling in Figure \ref{fig:fault} has only one fault at $x=1$ while the tiling in Figure \ref{fig:nofault} has no faults; such a tiling is called \emph{fault-free}. It is easily seen that if a tiling has $l$ faults then it can be decomposed uniquely into $l+1$ fault-free tilings. Let $a(m,n)$ denote the number of fault-free tilings of an $m\times n$ rectangle with $k\times 1$ tiles. The following well-known result \cite[Thm. 2.2.1.3]{MR3409342} relates all tilings $h(m,n)=h_k(m,n)$ to fault-free tilings.  
\begin{figure}[h]
\centering
\minipage{0.45\textwidth}
\centering
  \includegraphics{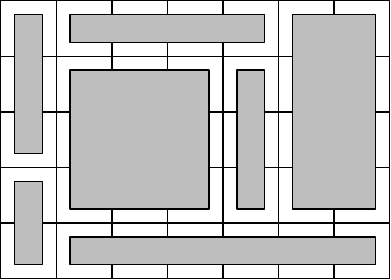} 
  \captionof{figure}{A fault at $x=1$.} 
  \label{fig:fault}
\endminipage 
\qquad
\minipage{0.45\textwidth}
\centering
  \includegraphics{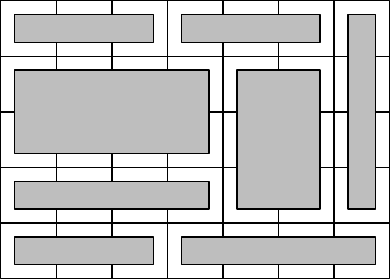}
  \captionof{figure}{A fault-free tiling.}    
  \label{fig:nofault}
  \endminipage
%  \captionof{figure}{A tiling with a fault at $x=1$ and a fault-free tiling.}
\end{figure}
\begin{lemma}
  \label{lem:gfs}
  If $H(x)=\sum_{n\geq 0}h(m,n)x^n$ and $A(x)=\sum_{n\geq 0}a(m,n)x^n$, then
  \begin{align*}
    H(x)=\frac{1}{1-A(x)}.
  \end{align*}
\end{lemma}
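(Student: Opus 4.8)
The plan is to prove the identity by setting up a bijection between arbitrary tilings and finite (possibly empty) ordered sequences of fault-free tilings, and then translating this bijection into the language of generating functions, where concatenation of tilings becomes the Cauchy product of power series.

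First I would make precise the decomposition that the excerpt already records. Given any tiling $T$ of the $m\times n$ rectangle, its faults are certain vertical lines $x=a_1<a_2<\cdots<a_l$ with $0<a_i<n$. These lines cut $T$ into $l+1$ vertical strips of widths $n_1=a_1,\ n_2=a_2-a_1,\dots,n_{l+1}=n-a_l$, and since no tile is crossed by any $x=a_i$, each strip is itself a tiling of an $m\times n_i$ rectangle. Moreover each strip has no internal fault: any fault of a strip would also be a fault of $T$ lying strictly between two consecutive $a_i$, contradicting the choice of the $a_i$ as the complete list of faults. Hence each strip is fault-free. Conversely, concatenating any sequence $T_1,\dots,T_{l+1}$ of fault-free tilings of widths $n_1,\dots,n_{l+1}$ produces a tiling of width $n_1+\cdots+n_{l+1}$ whose faults are exactly the $l$ junction lines: each junction is a fault because no tile spans it, and there are no other faults because each $T_i$ is fault-free. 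These two operations are mutually inverse, giving a bijection between tilings of the $m\times n$ rectangle and ordered sequences of fault-free tilings whose widths sum to $n$, with the empty tiling ($n=0$) corresponding to the empty sequence.

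Next I would pass to generating functions. Adopting the convention $a(m,0)=0$ (so the empty tiling is not counted as fault-free and $A(0)=0$), the number of ordered sequences of exactly $l$ fault-free tilings with total width $n$ is $\sum_{n_1+\cdots+n_l=n}\prod_{i=1}^{l}a(m,n_i)$, which is precisely the coefficient of $x^n$ in $A(x)^l$. Summing the bijection of the previous paragraph over all $l\ge 0$ therefore yields
\begin{align*}
  H(x)=\sum_{l\ge 0}A(x)^l.
\end{align*}
Because $A(x)$ has zero constant term, this geometric series is a well-defined formal power series summing to $1/(1-A(x))$, which is the claimed identity; the $l=0$ term contributes the constant $1$, matching $h(m,0)=1$.

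This is essentially the symbolic \emph{sequence construction}, so I anticipate no serious computational obstacle; the only point requiring care is the bijective correctness of the decomposition. Concretely, one must verify that the faults of a concatenation are exactly its junction lines -- that gluing two fault-free pieces neither creates a spurious internal fault nor erases a junction fault -- and, dually, that the ordered list of fault positions of a tiling is well defined and recovers the strips uniquely. Both facts follow directly from the definition of a fault as a vertical line meeting no tile's interior, so once this verification is in place the generating-function manipulation is routine.
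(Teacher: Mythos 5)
Your proof is correct and rests on the same idea as the paper's: decomposing a tiling at its fault lines into fault-free pieces. The only (immaterial) difference in packaging is that the paper conditions just on the \emph{first} fault to get the renewal recurrence $h(m,n)=\sum_{l=1}^{n}a(m,l)\,h(m,n-l)$, i.e.\ $H(x)=A(x)H(x)+1$, whereas you carry out the full sequence decomposition and sum the geometric series $\sum_{l\ge 0}A(x)^{l}$; these are routine repackagings of one another.
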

\begin{proof}
  Condition on the least positive integer $l$ such that the line $x=l$ does not intersect the interior of any tile; the number of tilings for a given $l$ is $a(m,l)h(m,n-l)$. Summing over possible values of $l$, we obtain 
  \begin{align*}
h(m,n)=\sum_{l=1}^n a(m,l)h(m,n-l) \quad (n\geq 1, \; h(m,0)=1).
  \end{align*}
  In terms of generating functions, the recurrence above reads $H(x)=A(x)H(x)+1$ which is equivalent to the statement of the lemma.
\end{proof}
% By Lemma \ref{lem:gfs} it suffices to compute the generating function for $a(m,n)$.
For $m<k$ it is clear that $h(m,n)=1$ when $n$ is a multiple of $k$ and 0 otherwise. For $m=k$ we have the following proposition.
\begin{proposition}\label{prop:mequalsk}
For $k>1$, we have
  \begin{align*}
    \sum_{n\geq 0 }h(k,n)x^n=\frac{1}{1-x-x^k}.
  \end{align*}
\end{proposition}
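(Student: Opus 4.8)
The plan is to invoke Lemma \ref{lem:gfs} with $m=k$, which reduces the problem to computing the fault-free generating function $A(x)=\sum_{n\geq 0}a(k,n)x^n$. I expect to prove that $A(x)=x+x^k$, so that $H(x)=1/(1-A(x))$ yields exactly $1/(1-x-x^k)$. Thus the whole content of the proposition is the structural claim that, for a $k\times n$ rectangle (height $k$ along the $y$-axis, width $n$ along the $x$-axis), there is precisely one fault-free tiling when $n=1$, precisely one when $n=k$, and none otherwise (with $a(k,0)=0$ by convention).

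First I would analyze the leftmost column, the cells with $x\in[0,1]$. Because the rectangle has height exactly $k$, any \emph{vertical} tile meeting this column fills the whole column, while any \emph{horizontal} tile meeting a column-$1$ cell must start there and span columns $1,\dots,k$ in a single row. Hence there are only two possibilities for column $1$: either it is covered by a single vertical tile, or each of its $k$ cells begins a horizontal tile, so that columns $1,\dots,k$ are completely filled by $k$ horizontal tiles. I would then eliminate the unwanted configurations using the fault-free hypothesis. In the first case, if $n>1$ the vertical line $x=1$ meets no tile interior (the column-$1$ tile has $x=1$ on its boundary, and every other tile lies in columns $\geq 2$), so $x=1$ is a fault; this forces $n=1$, which gives the unique single-vertical-tile tiling. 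In the second case, the left $k\times k$ block is fully horizontal, and if $n>k$ the line $x=k$ is a fault: columns $1,\dots,k$ are already occupied, so no tile can start inside them and straddle $x=k$, while a vertical tile never has an integer $x$-value in its interior and a horizontal tile to the right of column $k$ begins at column $\geq k+1$. This forces $n=k$, the unique all-horizontal tiling.

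Combining the two cases gives $a(k,1)=a(k,k)=1$ and $a(k,n)=0$ for all other $n$, so that $A(x)=x+x^k$ (here $k\geq 2$ guarantees $1\neq k$, so the two terms are distinct), and substituting into Lemma \ref{lem:gfs} completes the proof. I anticipate that the main obstacle is the careful bookkeeping in the second step: verifying that the candidate lines $x=1$ and $x=k$ are genuinely interior fault lines, in particular confirming that once columns $1,\dots,k$ are occupied no tile can cross $x=k$, and handling the boundary values $n\in\{0,1,k\}$ so that the small cases match the claimed generating function rather than producing spurious extra tilings.
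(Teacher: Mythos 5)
Your proposal is correct and follows exactly the paper's approach: compute $a(k,1)=a(k,k)=1$ and $a(k,n)=0$ otherwise, so $A(x)=x+x^k$, then apply Lemma \ref{lem:gfs}. The paper justifies the count of fault-free tilings only by pointing to a figure, whereas you spell out the first-column case analysis explicitly, which is a harmless (and welcome) elaboration of the same argument.
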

\begin{proof}
  Here $a(k,1)=1=a(k,k)$ while $a(k,n)=0$ for $n\notin \{1,k\}$ (see Figure \ref{fig:only2}). Thus the generating function for fault-free tilings is $A(x)=x+x^k$ and the proposition follows from Lemma~\ref{lem:gfs}.
  \begin{figure}[h]
\centering
\includegraphics[scale=.25]{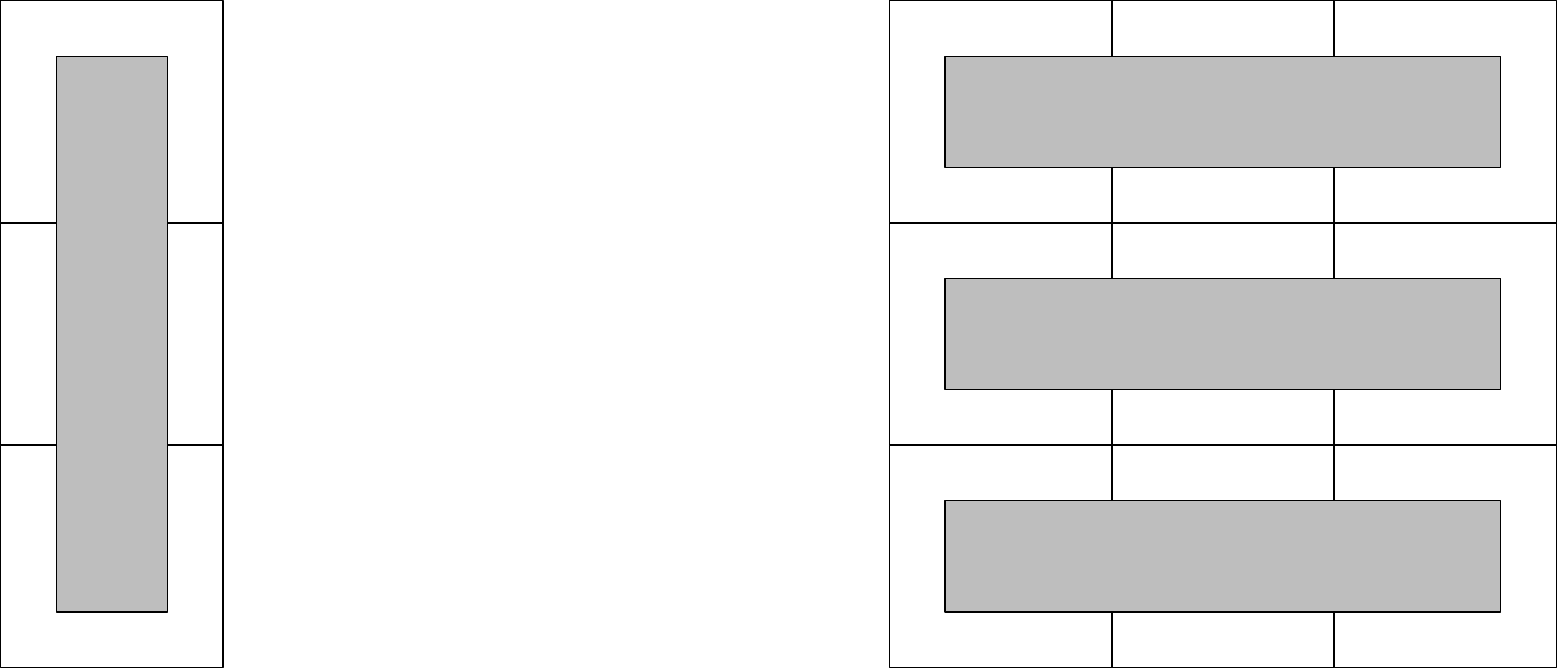}  
\caption{All possible fault-free tilings for \(m = k = 3\).}   
\label{fig:only2}     
\end{figure}
\end{proof}
\begin{remark}
  \label{rem:compositions}
  It follows from Proposition \ref{prop:mequalsk} that tilings of a $k\times n$ rectangle with $k\times 1$ tiles are in bijection with compositions of $n$ (i.e. tuples $(n_1,\ldots,n_r)$ of positive integers with $\sum n_i=n$) in which all parts are equal to 1 or $k$. We will require this fact later on. 
\end{remark}

We now consider the case $k<m<2k$. The following lemma is the key to computing fault-free tilings in this case.
\begin{lemma}
  \label{lem:main}
Suppose $k<m<2k$ and $n>k$. In any fault-free tiling of an $m\times n$ rectangle by $k\times 1$ tiles there exist $k$ contiguous rows such that all tiles in the remaining $m-k$ rows are horizontal.%has exactly $m-k$ rows consisting of only horizontal tiles.
\end{lemma}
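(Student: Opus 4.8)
The plan is to first reformulate the statement. Since $m<2k$, a vertical tile occupies a block of $k$ consecutive rows — call it the tile's \emph{window} $[s,s+k-1]$ with $1\le s\le m-k+1$ — and no column can contain two vertical tiles (two would need $2k\le m$ rows). Thus ``the remaining $m-k$ rows are horizontal'' is equivalent to the single assertion that \emph{all vertical tiles share one common window}, which is what I would prove. The second ingredient I would set up is a per-row picture: reading any one row from left to right, the tiling decomposes it into horizontal tiles (length $k$) and cells cut by vertical tiles (length $1$), i.e.\ a composition of $n$ into parts $k$ and $1$. Two consequences follow: the vertical cells of a given row sit in columns congruent to $1,2,3,\dots \pmod{k}$ in left-to-right order (the $t$-th from the left being $\equiv t$); and, for $a\equiv 0\pmod{k}$, the line $x=a$ misses the interior of every tile of that row exactly when the number of vertical cells in columns $1,\dots,a$ is divisible by $k$.

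I would then argue by contradiction. Suppose two distinct windows occur. Order all vertical tiles by column; let $s_0$ be the window-start of the leftmost tile, and let $T$, in column $C$, be the leftmost vertical tile whose window-start differs from $s_0$, so that the $i_1-1$ tiles lying in columns $<C$ all have start $s_0$ (here $i_1$ is the rank of $T$ by column). Two specially chosen rows drive the argument. First, a row $j^\dagger$ lying in the window of $T$ but not in the $s_0$-window; such a row exists because two distinct intervals of equal length $k$ are incomparable. In row $j^\dagger$ none of the start-$s_0$ tiles to the left of $C$ is present, while $T$ is, so $T$ is the \emph{leftmost} vertical cell of that row, forcing $C\equiv 1\pmod{k}$. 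Second, a row $j^\ast$ in the intersection of \emph{all} windows; this intersection is nonempty because the extreme window-starts differ by at most $m-k\le k-1$. In row $j^\ast$ every vertical tile appears, so $T$ is the $i_1$-th vertical cell, forcing $C\equiv i_1\pmod{k}$. Comparing the two gives $i_1\equiv 1\pmod{k}$, i.e.\ $k\mid i_1-1$.

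The punchline is to take the line $x=a$ with $a=C-1\equiv 0\pmod{k}$. In \emph{every} row, the vertical cells in columns $1,\dots,C-1$ are precisely the cells of those $i_1-1$ start-$s_0$ tiles that pass through that row, so their number is either $0$ (if the row misses the $s_0$-window) or $i_1-1$ (if it meets it) — in both cases divisible by $k$. By the criterion from the first paragraph, $x=a$ is then a fault, and $1\le a\le n-1$, contradicting fault-freeness; hence a single window is used, as required. The step I expect to be the main obstacle is making the modular criterion airtight: one must rule out a horizontal tile \emph{straddling} the line $x=a$ even when the count of vertical cells to its left is divisible by $k$. The key observation is that a straddling length-$k$ block covering columns $a$ and $a+1$ must begin at a column whose residue modulo $k$ lies in $\{0,2,3,\dots,k-1\}$, never $\equiv 1\pmod k$, which is incompatible with the vertical-cell count being $\equiv 0$; this is exactly where $a\equiv 0\pmod{k}$ is essential. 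Finally, the hypothesis $n>k$ enters naturally at the end, since $C\equiv 1\pmod k$ together with $C\ge 2$ forces $C\ge k+1$, so a second window can appear at all only when $n\ge k+1$.
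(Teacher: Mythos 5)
Your proof is correct, and while it shares its opening move with the paper's argument, the way you reach the contradiction is genuinely different. Both proofs isolate the leftmost vertical tile $T$ whose $k$-row window differs from that of the leftmost vertical tile, and both deduce that its column $C$ satisfies $C\equiv 1\pmod k$ by examining a row that $T$ meets but the earlier vertical tiles miss. From there the paper works locally: it notes that column $1$ must contain the unique first vertical tile (else $x=k$ is a fault), writes $C=k\ell+1$, and then propagates forced horizontal tiles leftward along the top row of the first window, from column $k\ell$ down to column $k$, until the last horizontal tile collides with the vertical tile in column $1$. You instead work globally: you encode each row as a composition of $n$ into parts $1$ and $k$, extract the modular criterion that for $a\equiv 0\pmod k$ the line $x=a$ cuts no tile of a row exactly when the row has $\equiv 0\pmod k$ vertical cells in columns $1,\dots,a$, use the row common to all windows to force $i_1-1\equiv 0\pmod k$, and then exhibit $x=C-1$ as a fault outright. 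Your route is somewhat longer but buys a reusable characterization of fault lines at multiples of $k$, makes the straddling-tile subtlety fully explicit (the paper's assertion that a vertical tile at column $k\ell$ would create a fault there is left terse), and does not need the preliminary fact that column $1$ carries a vertical tile. The paper's route is shorter and more pictorial. Both are valid; no gaps in yours.
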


\begin{proof}
  %Say that a tile is \emph{placed} at $(r,s)$ if, for each cell $(p,q)$ covered by the tile, $p\leq r$ and $q\leq s$.
  A cell $(i,j)$ is said to be in row $i$ and column $j$. Consider a fault-free tiling of the $m\times n$ rectangle for $n>k$. Not all cells in column 1 are covered by horizontal tiles since this would create a fault at $x=k$. It follows that there is precisely one vertical tile in the first column (see Figure \ref{fig:placed}); suppose the topmost cell of this tile is $(1,b)$. 
  \begin{figure}[h]
\centering
% \minipage{0.5\textwidth}
% \centering
%   \includegraphics[scale=.25]{}  
%   \caption{Vertical fault at \(x=k\).}
%   \label{fig:vf}
% \endminipage
%\minipage{0.5\textwidth}
  \includegraphics[scale=1]{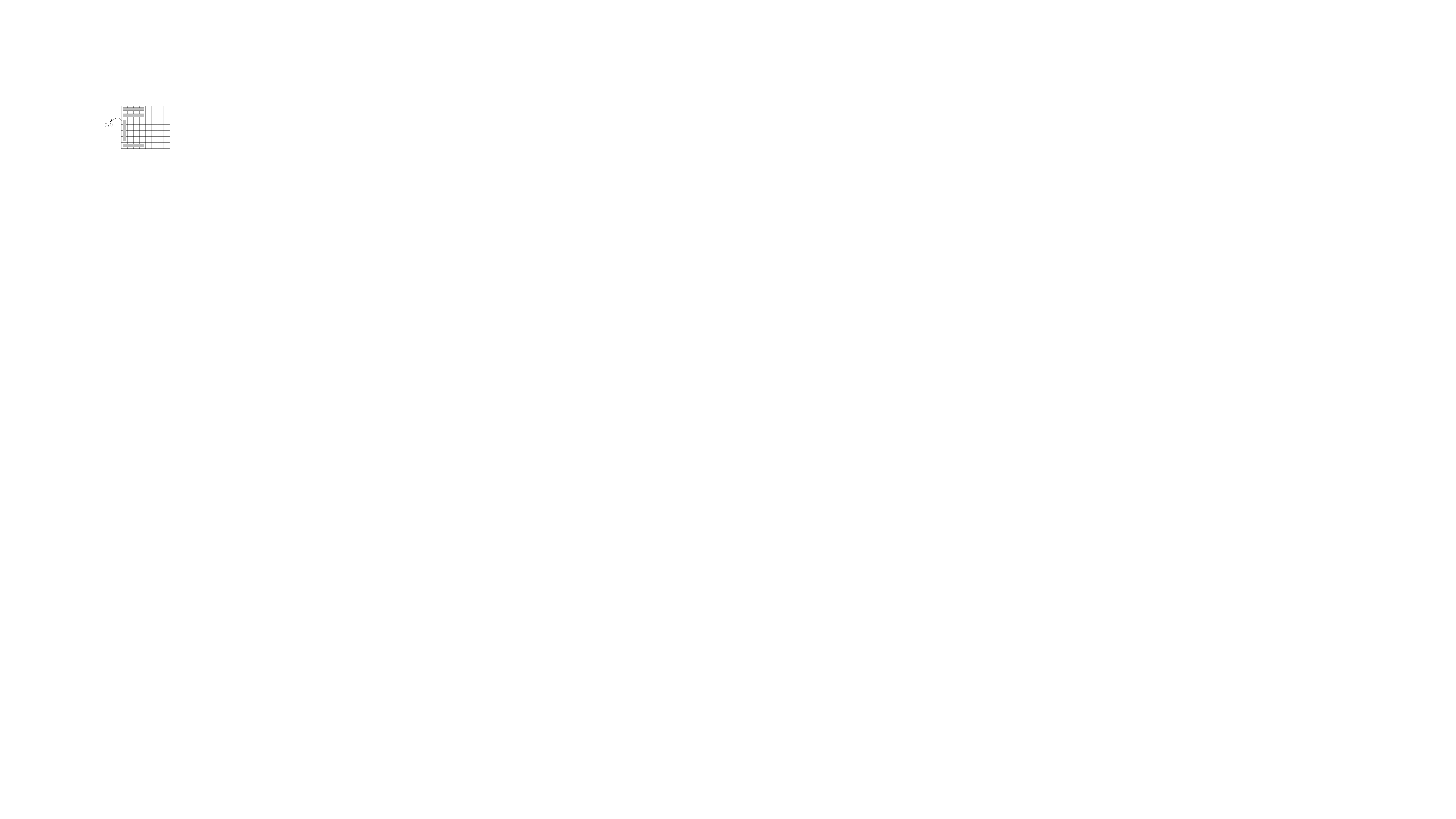} 
  \caption{Vertical tile in the first column.}     
  \label{fig:placed}
%\endminipage 
\end{figure}

We claim that the $m-k$ rows that do not intersect this vertical tile consist of only horizontal tiles. If not, then consider the leftmost vertical tile that intersects one of the aforementioned rows and suppose its top cell is $(a',b')$ where $b'\neq b$. By reflecting through a horizontal line if necessary, we may assume without loss of generality that $b'>b$ (see Figure \ref{fig:contra}).
\begin{figure}[h]
\centering
\includegraphics{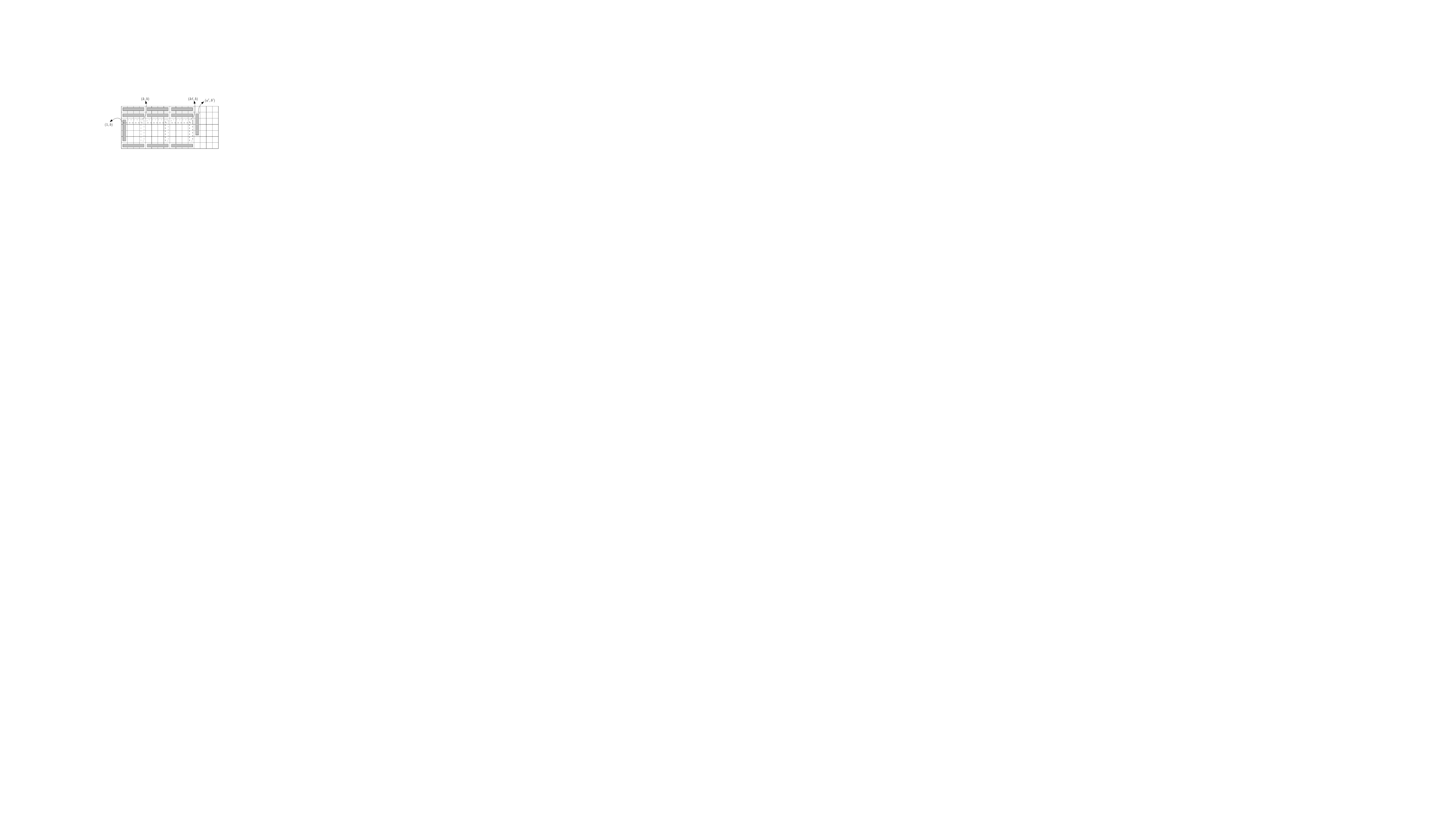} 
\caption{The contradiction obtained.}      
\label{fig:contra}
\end{figure}
By the minimality of $a'$, it follows that $a'=k\ell+1$ for some positive integer $\ell$. Moreover, the first $k\ell$ cells in each row that does not intersect the vertical tile in the first column are covered by $\ell$ horizontal tiles. Now consider the tile covering the cell $(k\ell,b)$. If this tile were vertical, there would be a fault at $x=k\ell$ and, therefore, this tile must be horizontal. By the same reasoning, the tile covering the cell $(k(\ell-1),b)$ must be horizontal. Continuing this line of reasoning, it is clear that the tile covering the cell $(k,b)$ must be horizontal which is impossible since this tile would then cover $(1,b)$ which is already covered by the vertical tile in the first column. This proves the claim and the lemma. 
\end{proof}
The idea used to prove Lemma \ref{lem:main} can also be used to show that for $n>k$, any fault-free tiling of an $m\times n$ rectangle in which each tile has dimensions $k\times j$ for some $1\leq j\leq k$ has $m-k$ rows consisting of only horizontal tiles but we do not require this stronger result here.

\begin{proposition}
  \label{prop:fft}
  Let $k>1$ be a positive integer and suppose $k<m<2k$. The number of fault-free tilings of an $m\times k\ell$ rectangle is given by
  \begin{align*}
    a(m,k\ell)=
    \begin{cases}
      m-k+2 & \ell=1,\\
      (m-k+1){k+\ell-3 \choose k-2} & \ell \geq 2.
    \end{cases}
  \end{align*}
\end{proposition}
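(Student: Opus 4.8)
The plan is to split into the base case $\ell = 1$, handled by a direct structural count, and the main case $\ell \ge 2$, which I would reduce via Lemma~\ref{lem:main} and Remark~\ref{rem:compositions} to a problem about compositions. For $\ell = 1$ the rectangle is $m \times k$, so the only candidate fault lines are $x = 1, \dots, k-1$, none a multiple of $k$. Since a horizontal tile must occupy all $k$ columns and therefore crosses every such line, a tiling of $m \times k$ can have a fault only if it uses no horizontal tile at all; but an all-vertical tiling would force $k \mid m$, impossible for $k < m < 2k$. Hence every tiling of $m \times k$ is fault-free, and $a(m,k) = h(m,k)$. To count these, I would let $S$ be the set of rows met by no horizontal tile: in each column the rows of $S$ are covered by vertical tiles, so $|S|$ is a multiple of $k$, and $|S| \le m < 2k$ forces $|S| \in \{0, k\}$. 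The value $|S| = 0$ gives the unique all-horizontal tiling, whereas $|S| = k$ forces $S$ to be $k$ consecutive rows filled by one vertical tile per column, yielding $m - k + 1$ positions; thus $a(m,k) = 1 + (m-k+1) = m - k + 2$.

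For $\ell \ge 2$ I would apply Lemma~\ref{lem:main} to obtain $k$ contiguous rows outside of which every tile is horizontal. Each outside row is a $1 \times k\ell$ strip, tiled by $\ell$ horizontal tiles in the unique possible way, and every vertical tile must lie in the block of $k$ rows; as each vertical tile spans exactly $k$ rows, all vertical tiles share one row-span, so the block is uniquely determined and may sit in any of the $m - k + 1$ positions. The outside rows cross every line $x = a$ with $a$ not a multiple of $k$, so the full tiling is fault-free exactly when the $k \times k\ell$ block tiling has no fault at the lines $x = jk$, $1 \le j \le \ell - 1$. Writing $V(k,\ell)$ for the number of such block tilings — each of which necessarily contains a vertical tile, so the position of the block is genuinely recoverable — the decomposition is a bijection and gives $a(m, k\ell) = (m - k + 1)\,V(k,\ell)$, reducing the problem to computing $V(k,\ell)$.

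Finally I would compute $V(k,\ell)$ using Remark~\ref{rem:compositions}, which identifies tilings of $k \times k\ell$ with compositions of $k\ell$ into parts $1$ and $k$, the faults corresponding exactly to the proper partial sums. The defining condition becomes: no proper partial sum is divisible by $k$. The crux, and the step I expect to be the main obstacle, is a residue argument modulo $k$: a part equal to $k$ leaves the partial sum unchanged mod $k$, so after $i$ parts the partial sum is congruent to the number $c_i$ of $1$-parts among the first $i$. If the composition has $p$ parts equal to $k$ and $q$ equal to $1$, then $q = k(\ell - p)$ is a multiple of $k$, and $c_i$ increases by $0$ or $1$ from $c_0 = 0$ to $c_r = q$, hence takes every integer value in $[0,q]$. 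Any internal multiple of $k$ in $(0,q)$ is then attained at a proper index, giving a forbidden partial sum; this rules out $q \ge 2k$, while $q = 0$ (all parts equal to $k$) is excluded since the first partial sum equals $k$. Thus $q = k$ exactly, forcing $\ell - 1$ parts equal to $k$, and the forbidden residues $0$ and $k$ are avoided precisely when the first and last parts are $1$'s. Counting arrangements of the remaining $k - 2$ ones and $\ell - 1$ parts of size $k$ over the $k + \ell - 3$ interior positions gives $V(k,\ell) = \binom{k + \ell - 3}{k - 2}$, which together with $a(m,k\ell) = (m-k+1)\,V(k,\ell)$ completes the proof.
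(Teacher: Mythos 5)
Your proof is correct and follows essentially the same route as the paper: the $\ell=1$ case by a direct structural count, and the $\ell\ge 2$ case via Lemma~\ref{lem:main} together with the composition bijection of Remark~\ref{rem:compositions}, reducing to compositions of $k\ell$ with parts in $\{1,k\}$ and no proper partial sum divisible by $k$. Your residue argument modulo $k$ simply spells out in more detail the paper's assertion that the number of parts equal to $1$ must be a multiple of $k$ and cannot exceed $k$.
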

\begin{proof}
  For an $m\times k$ rectangle there is one fault-free tiling in which all tiles are horizontal. Any other fault-free tiling of this rectangle contains precisely $m-k$ horizontal tiles and $k$ vertical tiles; there are $m-k+1$ such tilings. Thus $a(m,k)=m-k+2$.  

  Now consider a fault-free tiling of an $m\times k\ell$ rectangle. Such a tiling has $m-k$ horizontal rows by Lemma \ref{lem:main}. If these $m-k$ rows are removed, what remains is a tiling of a $k\times k\ell$ rectangle in which no faults occur at $x=jk$ for $j\geq 1$; denote by $N$ the number of tilings so obtained. By Remark~\ref{rem:compositions}, these tilings are in bijection with compositions $(n_1,\ldots,n_r)$ of $k\ell$ where each $n_i\in \{1,k\}$ and such that $k$ does not divide $\sum_{i=1}^sn_i$ for each $s<r$. This condition on the partial sums implies $n_1=1=n_r$. Further, the number of $n_i$'s equal to 1 must be a multiple of $k$ but the condition on the partial sums ensures that no more than $k$ of them can equal 1. It follows that precisely $k$ of the $n_i$ are 1 and hence $r=k+\ell-1$.  Since precisely $k-2$ of the $n_i(2\leq i\leq r-1)$ are equal to 1 and all these choices are possible, we obtain $N={k+\ell-3 \choose k-2}.$ On the other hand, the number of fault-free tilings of an $m\times k\ell$ rectangle that yield a given $k\times k\ell$ rectangle upon removing the $m-k$ horizontal rows is clearly $m-k+1$. Therefore
  \begin{align*}
  a(m,k\ell)&=(m-k+1){k+\ell-3 \choose \ell-1}.  \qedhere
  \end{align*}
\end{proof}

\begin{corollary}
  \label{cor:fft}
Under the hypotheses of Proposition \ref{prop:fft} every fault-free tiling of an $m\times k\ell$ rectangle ($\ell>1$) by $k\times 1$ tiles contains precisely $k$ vertical tiles.
\end{corollary}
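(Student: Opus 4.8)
The plan is to read off the conclusion directly from the structural analysis carried out in the proof of Proposition \ref{prop:fft}, so almost no new work should be needed. First I would recall that, by Lemma \ref{lem:main}, a fault-free tiling of an $m\times k\ell$ rectangle has $m-k$ rows consisting entirely of horizontal tiles; these rows contribute no vertical tiles whatsoever. Hence every vertical tile in the tiling must lie within the $k$ contiguous rows that remain after these horizontal rows are deleted, and it suffices to count vertical tiles in the resulting $k\times k\ell$ tiling.

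Next I would invoke the bijection of Remark \ref{rem:compositions}, under which this $k\times k\ell$ tiling corresponds to a composition $(n_1,\ldots,n_r)$ of $k\ell$ with each $n_i\in\{1,k\}$. The crucial point, already established in the proof of Proposition \ref{prop:fft}, is that exactly $k$ of the parts $n_i$ are equal to $1$. I would then determine how many vertical tiles each part contributes by returning to the fault decomposition underlying Proposition \ref{prop:mequalsk}: a part equal to $1$ corresponds to the width-$1$ fault-free block, which is a single vertical $k\times 1$ tile, whereas a part equal to $k$ corresponds to the width-$k$ fault-free block tiled entirely by $k$ horizontal tiles. Thus a part contributes a vertical tile if and only if it equals $1$, and each such part contributes exactly one.

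Combining these observations, the number of vertical tiles in the $k\times k\ell$ tiling equals the number of parts $n_i$ equal to $1$, namely $k$; adding the zero vertical tiles coming from the deleted horizontal rows gives a total of exactly $k$ vertical tiles, as claimed.

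The argument is essentially a bookkeeping consequence of the previous proof, so I do not anticipate a genuine obstacle. The one point requiring care is correctly matching composition parts to tile orientations in Proposition \ref{prop:mequalsk} --- specifically confirming that the width-$1$ block (part $1$) is the vertical tile and the width-$k$ block (part $k$) is all-horizontal, rather than the reverse --- since reversing this would spuriously count $k(\ell-1)$ vertical tiles instead of $k$. Here it is worth noting that an all-vertical $k\times k$ block would in fact have faults at every $x=a$ with $1\le a\le k-1$, which both confirms that the width-$k$ fault-free block is all-horizontal and rules out the erroneous matching.
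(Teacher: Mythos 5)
Your argument is correct and is exactly the reasoning the paper intends: the corollary is stated without a separate proof because it follows from the analysis in Proposition \ref{prop:fft}, namely that the $m-k$ all-horizontal rows contribute no vertical tiles and the remaining $k\times k\ell$ strip corresponds to a composition with exactly $k$ parts equal to $1$, each of which is a single vertical tile. Your closing sanity check (that an all-vertical width-$k$ block would create faults, so parts equal to $k$ must be all-horizontal) is a correct and worthwhile verification of the matching.
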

\begin{remark}
  \label{rem:blocks}
It is clear from the proof of Proposition \ref{prop:fft} that each fault-free tiling of an $m\times k\ell$ rectangle ($\ell>1; k<m<2k$) contains precisely $\ell-1$ `blocks', where a block is defined as a collection of $k$ contiguous horizontal $k\times 1$ tiles, one on top of the other.
\end{remark}

\begin{theorem}
  \label{th:main1}
Suppose $k<m<2k$ and $h_k(m,n)$ denotes the number of tilings of an $m\times n$ rectangle with $k\times 1$ tiles. Then 
  \begin{align*}
\sum_{n\geq 0}h_k(m,n)x^n=     \frac{(1-x^k)^{k-1}}{(1-x^k)^k - (m-k+1)x^k}.
  \end{align*}
\end{theorem}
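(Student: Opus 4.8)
The plan is to invoke Lemma \ref{lem:gfs}, which reduces the problem to computing the generating function $A(x)$ for fault-free tilings and then forming $H(x)=1/(1-A(x))$. Since $k<m<2k$ forces $k\nmid m$, Klarner's existence theorem guarantees that any tiling of an $m\times n$ rectangle (in particular a fault-free one) requires $k\mid n$; hence $a(m,n)=0$ unless $n=k\ell$, and $A(x)$ is a power series in $x^k$ with vanishing constant term. This is what lets Proposition \ref{prop:fft}, which only records the values $a(m,k\ell)$, determine $A(x)$ completely.

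First I would substitute the values from Proposition \ref{prop:fft} to write
\begin{align*}
A(x)=(m-k+2)x^k+(m-k+1)\sum_{\ell\geq 2}\binom{k+\ell-3}{\ell-1}x^{k\ell}.
\end{align*}
Setting $u=x^k$ and reindexing the tail sum by $p=\ell-1$ turns it into $u\sum_{p\geq 1}\binom{k-2+p}{p}u^p$, which I would evaluate using the generalized binomial identity $\sum_{p\geq 0}\binom{k-2+p}{p}u^p=(1-u)^{-(k-1)}$ after peeling off the $p=0$ term.

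This should collapse $A$ to the remarkably clean form
\begin{align*}
A(x)=x^k+\frac{(m-k+1)x^k}{(1-x^k)^{k-1}},
\end{align*}
since the $(m-k+2)$ contribution and the $-(m-k+1)$ term split off from the binomial series combine to leave a single $x^k$. Finally I would form $1-A(x)$ over the common denominator $(1-x^k)^{k-1}$, using $(1-x^k)(1-x^k)^{k-1}=(1-x^k)^k$ to produce the numerator $(1-x^k)^k-(m-k+1)x^k$; taking the reciprocal via Lemma \ref{lem:gfs} then yields the stated formula.

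I expect the only delicate step to be the summation of the binomial series: one must correctly match $\binom{k+\ell-3}{\ell-1}$ to the negative-binomial coefficients $\binom{k-2+p}{p}$ and handle the shift in the lower summation limit, since an off-by-one error in the index would spoil the cancellation that collapses $A$ to its simple closed form. Everything after that is routine algebraic simplification.
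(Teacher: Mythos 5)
Your proposal is correct and follows essentially the same route as the paper: read off $a(m,k\ell)$ from Proposition \ref{prop:fft}, sum the negative-binomial series to obtain $A(x)=x^k+(m-k+1)x^k/(1-x^k)^{k-1}$, and apply Lemma \ref{lem:gfs}. Your explicit remark that $a(m,n)=0$ unless $k\mid n$ (via Klarner's theorem, since $k<m<2k$ forces $k\nmid m$) is a small point the paper leaves implicit, but otherwise the two arguments coincide.
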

\begin{proof}
  By Proposition~\ref{prop:fft}, it follows that the generating function for fault-free tilings is given by
  \begin{align}
    A(x)&=\sum_{\ell\geq 1}a(m,k\ell)x^{k\ell}\nonumber \\
    &=(m-k+2)x^k+(m-k+1)\sum_{\ell\geq 2}{k+\ell-3 \choose \ell-1}x^{k\ell} \nonumber \\ 
    &=x^k + \frac{(m-k+1)x^k}{(1-x^k)^{k-1}}.\label{eq:a}
  \end{align} 
The theorem now follows from Lemma \ref{lem:gfs}.%, it follows that the generating function for all tilings equals $\frac{1}{1-A(x)}$ and the theorem follows. 
\end{proof}

  Several OEIS sequences which correspond to the generating function in Theorem \ref{th:main1} are shown in Table \ref{tab:oeis}.% along with the corresponding values of $k$ and $m$.
  \begin{center}
    \begin{table}[h]
    \begin{tabular}{|c|c|c|}
      \hline   
      $k$ & $m$ &  \text{OEIS entry}\\
      \hline
    2 & 3 &\href{http://oeis.org/A001835}{A001835}\\
    3& 4 & \href{http://oeis.org/A049086}{A049086}\\
    3& 5& \href{http://oeis.org/A236576}{A236576} \\
    4& 5 & \href{http://oeis.org/A236579}{A236579}\\
    4& 6 & \href{http://oeis.org/A236580}{A236580}\\
    4& 7 & \href{http://oeis.org/A236581}{A236581}\\
    \hline
    \end{tabular}
    \caption{OEIS entries}
    \label{tab:oeis}
    \end{table} 
  \end{center}
Theorem \ref{th:main1} may be viewed as a special case of the following more general result.    
  \begin{theorem}\label{th:vert}
    Suppose $k<m<2k$ and let $b_k(m,n,r)$ denote the number of $k\times 1$ tilings of an $m\times n$ rectangle which contain precisely $r$ vertical tiles. Then
    \begin{align}
      \sum_{n,r\geq 0}b_k(m,n,r)x^ny^r=  \frac{(1-x^k)^{k-1}}{(1-x^k)^k - (m-k+1)x^ky^k}.\label{eq:bk}
    \end{align}
  \end{theorem}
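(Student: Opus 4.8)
The plan is to mirror the proof of Theorem~\ref{th:main1}, but to keep track of the number of vertical tiles throughout by introducing the variable $y$. First I would establish a bivariate refinement of Lemma~\ref{lem:gfs}. The decomposition of an arbitrary tiling into fault-free pieces obtained by cutting at the leftmost fault is additive in the number of vertical tiles, since each tile lies entirely within one piece: if such a decomposition yields a fault-free piece with $s$ vertical tiles followed by a tiling with $r-s$ vertical tiles, the total is $r$. Hence, writing $A(x,y)=\sum_{n,s\geq 0}a_s(m,n)x^ny^s$ for the generating function of fault-free tilings, where $a_s(m,n)$ counts fault-free tilings with exactly $s$ vertical tiles, and writing $B(x,y)$ for the left-hand side of \eqref{eq:bk}, the same first-fault recurrence gives $B(x,y)=A(x,y)B(x,y)+1$, so that $B(x,y)=1/(1-A(x,y))$.

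The key step is then to compute $A(x,y)$, and here I would exploit the fact, already established, that the number of vertical tiles in a fault-free tiling is essentially constant. By Corollary~\ref{cor:fft}, every fault-free tiling of an $m\times k\ell$ rectangle with $\ell\geq 2$ has exactly $k$ vertical tiles, so each contributes a factor $y^k$. For $\ell=1$ the unique all-horizontal tiling has no vertical tiles, while each of the remaining $m-k+1$ fault-free tilings of the $m\times k$ rectangle has exactly $k$ vertical tiles. Thus $y$ enters only as $y^0$ (for the single all-horizontal tiling) or as $y^k$, and combining this with the counts from Proposition~\ref{prop:fft} gives
\begin{align*}
A(x,y)=x^k+(m-k+1)y^k\sum_{\ell\geq 1}\binom{k+\ell-3}{\ell-1}x^{k\ell}.
\end{align*}

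Summing the series via the identity $\sum_{j\geq 0}\binom{k-2+j}{j}t^j=(1-t)^{-(k-1)}$, after the shift $j=\ell-1$, then yields the closed form
\begin{align*}
A(x,y)=x^k+\frac{(m-k+1)x^ky^k}{(1-x^k)^{k-1}},
\end{align*}
which specializes to \eqref{eq:a} when $y=1$. Substituting into $B(x,y)=1/(1-A(x,y))$ and clearing the factor $(1-x^k)^{k-1}$ from numerator and denominator produces the claimed expression \eqref{eq:bk}.

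I do not anticipate a genuine obstacle here: the entire content is the observation that vertical tiles occur in fault-free tilings only in the fixed quantity $k$ (or not at all), so that refining by $y$ amounts to attaching a single factor $y^k$ to the vertical-tile part of $A(x)$. The only point requiring a little care is the bookkeeping in the refined version of Lemma~\ref{lem:gfs}, namely verifying that the first-fault decomposition partitions the vertical tiles additively across the resulting pieces, which is immediate since no tile straddles a fault line.
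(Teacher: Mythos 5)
Your proposal is correct and follows essentially the same route as the paper: both use Corollary~\ref{cor:fft} (plus the $\ell=1$ case) to observe that every fault-free piece contributes either $y^0$ or $y^k$, attach the factor $y^k$ to the vertical-tile part of $A(x)$ from Equation~\eqref{eq:a}, and conclude via the first-fault decomposition that the answer is $1/(1-A(x,y))$. Your explicit verification that the bivariate version of Lemma~\ref{lem:gfs} holds because no tile straddles a fault is a detail the paper leaves implicit, but it is the same argument.
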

  \begin{proof}
Consider the number of vertical tiles in fault-free tilings of an $m\times k\ell$ rectangle by $k\times 1$ tiles. For $\ell=1$, there is one such tiling with no vertical tiles and $m-k+1$ tilings with precisely $k$ vertical tiles. By Corollary \ref{cor:fft} each such fault-free tiling for $\ell>1$ contains precisely $k$ vertical tiles. Therefore the generating function for $b_k(m,n,r)$ is
$$\frac{1}{1-A(x,y)},$$
where, $A(x,y)$ can be computed from the expression for $A(x)$ in Equation \eqref{eq:a} as
\begin{align*}
  A(x,y)&=x^k + \frac{(m-k+1)x^ky^k}{(1-x^k)^{k-1}}.\qedhere
\end{align*}
\end{proof}
\section{Brick tilings of a cuboid} 
The results of the previous section can be used to derive the generating function for the number of tilings of an $m\times n\times k$ cuboid with $k\times k\times 1$ bricks. In order to prove the result we will require the following theorem on tilings of an $m\times n$ rectangle with $k\times 1$ and $k\times k$ tiles. Fault-free tilings for $m\leq k$ are easily enumerated, so we consider the case $m>k$. 
\begin{theorem}
  \label{th:main2}
  Suppose $k<m<2k$ and $h'(m,n)$ denotes the number of tilings of an $m\times n$ rectangle with $k\times 1$ tiles and $k\times k$ tiles. Then
  \begin{align*}
\sum_{n\geq 0}h'(m,n)x^n = \frac{(1-2x^k)^{k-1}}{(1-2x^k)^{k-1}[1 - (m-k+2)x^k] - (m-k+1)x^k}.
  \end{align*}
\end{theorem}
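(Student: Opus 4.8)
The plan is to mirror the fault-free strategy of Section~2. The proof of Lemma~\ref{lem:gfs} uses only the unique decomposition of a tiling into fault-free pieces along its leftmost faults, and this decomposition is insensitive to the shapes of the tiles; hence Lemma~\ref{lem:gfs} applies verbatim to the enlarged tile set. So if $A'(x)=\sum_{n\ge 0}a'(m,n)x^n$ denotes the generating function for \emph{fault-free} tilings of the $m\times n$ rectangle by $k\times 1$ and $k\times k$ tiles, then $H'(x)=1/(1-A'(x))$, and the entire problem reduces to computing $A'(x)$. I expect to prove
\[
A'(x)=(m-k+2)x^k+\frac{(m-k+1)x^k}{(1-2x^k)^{k-1}},
\]
which is exactly the expression \eqref{eq:a} for $A(x)$ with $(1-x^k)^{k-1}$ replaced by $(1-2x^k)^{k-1}$ and the constant term adjusted. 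Given this, the stated formula follows by forming $1/(1-A'(x))$, clearing the denominator $(1-2x^k)^{k-1}$, and collecting terms, which is a one-line computation.

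First I would re-establish the structural result of Lemma~\ref{lem:main} for the mixed tile set. The crucial point is that a $k\times k$ tile, like a vertical $k\times 1$ tile, spans $k$ contiguous rows, and since $m-k<k$ no such tile can lie entirely inside the $m-k$ ``outer'' rows. As in the proof of Lemma~\ref{lem:main}, column~$1$ cannot be covered solely by tiles of horizontal width $k$ (horizontal $k\times 1$ tiles and $k\times k$ tiles all terminate at column $k$, forcing a fault at $x=k$), so column~$1$ contains a vertical $k\times 1$ tile; as $m<2k$ there is exactly one such tile, fixing a block of $k$ contiguous rows. The leftmost-tile argument then carries over unchanged: the same propagation of the ``must be horizontal'' conclusion down to column~$1$ yields a contradiction, because both vertical and square tiles respect the width-$k$ block alignment. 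This gives the analogue of Lemma~\ref{lem:main} and of Remark~\ref{rem:blocks}: every fault-free tiling with $n=k\ell>k$ has $m-k$ purely horizontal outer rows and a central block of $k$ rows, uniquely identified by the vertical/square tiles it contains.

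Next I would count $a'(m,k\ell)$. For $\ell=1$ a direct enumeration of the $m\times k$ strip gives the all-horizontal tiling ($1$ way), a single $k\times k$ tile in one of $m-k+1$ vertical positions, and $k$ side-by-side vertical tiles in one of $m-k+1$ positions, so $a'(m,k)=2m-2k+3$, matching the coefficient of $x^k$ above. For $\ell\ge 2$ I would extend Proposition~\ref{prop:fft}: deleting the $m-k$ horizontal rows yields a tiling of the central $k\times k\ell$ strip with no fault at any multiple of $k$, and the $m-k+1$ placements of the block contribute the factor $(m-k+1)$ exactly as before. By the bijection of Remark~\ref{rem:compositions} the reduced strip corresponds to a composition of $k\ell$ into parts $1$ and $k$ whose proper partial sums avoid multiples of $k$; there are $\binom{k+\ell-3}{\ell-1}$ of these, each with precisely $\ell-1$ parts equal to $k$. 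The new feature is that each width-$k$ block (a part equal to $k$) can be realized in exactly two fault-equivalent ways, namely as $k$ stacked horizontal tiles or as a single $k\times k$ tile, while a part equal to $1$ is still a single vertical column; swapping a horizontal block for a square tile changes no fault line, so the prefix-sum conditions are preserved. This independence gives $2^{\ell-1}$ realizations per composition, hence $a'(m,k\ell)=(m-k+1)\binom{k+\ell-3}{\ell-1}2^{\ell-1}$, and summing $\sum_{\ell\ge 1}a'(m,k\ell)x^{k\ell}$ reproduces the displayed $A'(x)$.

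The main obstacle I anticipate is the structural step: verifying that square tiles, which are ``wide'' rather than merely ``tall,'' cannot subvert the conclusion that the outer $m-k$ rows are purely horizontal. I expect this to be controlled entirely by the inequality $m-k<k$, which forbids a $k\times k$ tile from fitting in the outer rows and forces every such tile to occupy the central block; once this is pinned down, the remaining enumeration and the passage to $A'(x)$ and $H'(x)$ are routine.
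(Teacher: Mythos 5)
Your proof is correct and arrives at exactly the paper's count $a'(m,k\ell)=2^{\ell-1}(m-k+1)\binom{k+\ell-3}{\ell-1}$ and hence the same $A'(x)$ and $H'(x)$; the only real difference is how the factor $2^{\ell-1}$ is justified. You re-prove the structural Lemma~\ref{lem:main} for the enlarged tile set (using $m-k<k$ to keep $k\times k$ tiles out of the outer rows) and then redo the composition count with each part equal to $k$ realizable in two ways. The paper sidesteps that structural work entirely: it defines a map $\Psi$ replacing each $k\times k$ tile by $k$ stacked horizontal $k\times 1$ tiles, notes that $\Psi$ surjects the mixed fault-free tilings onto the pure ones already counted in Proposition~\ref{prop:fft}, and that every fiber has size $2^{\ell-1}$ because each pure fault-free tiling has exactly $\ell-1$ blocks (Remark~\ref{rem:blocks}), each of which may independently be collapsed to a square. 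The $\Psi$-map argument is shorter and reuses Proposition~\ref{prop:fft} wholesale; your route is more self-contained but shifts the burden onto extending Lemma~\ref{lem:main}, which you only sketch. The sketch is sound and can be completed: a $k\times k$ tile meeting an outer row must also meet the central $k$ rows (since $m-k<k$) and, by the leftmost-offender minimality, must start at a column $\equiv 1 \pmod k$; in the propagation step the tile covering $(kj,b)$ is then either vertical or a square lying entirely in the central rows and ending at column $kj$ (forcing a fault at $x=kj$) or a width-$k$ tile spilling into column $kj+1$ (colliding with the previously forced tile), so the descent to column $1$ goes through. The $\ell=1$ enumeration and the passage from $A'(x)$ to the stated rational function are identical in both arguments.
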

\begin{proof}
  Let $a'(m,n)$ denote the number of fault-free tilings of an $m\times n$ rectangle with $k\times 1$ and $k\times k$ tiles. For fault-free tilings of an $m\times k$ rectangle, we have
  \begin{enumerate}
  \item 1 tiling with only horizontal $k\times 1$ tiles;
  \item $m-k+1$ tilings containing a $k\times k$ tile;
  \item $m-k+1$ tilings containing precisely $k$ vertical $k\times 1$ tiles.
  \end{enumerate}
  Thus $a'(m,k)=2m-2k+3$. Now suppose $n=k\ell$ with $\ell>1$.
%The results of the previous section can also be used to count tilings of an $m\times n$ rectangle with tiles of size $k\times 1$ and $k\times k$.
  Let $T'_k(m,n)$ be the set of all fault-free tilings of an $m\times n$ rectangle by $k\times 1$ and $k\times k$ tiles. Denote by $T_k(m,n)$ the subset of $T'_k(m,n)$ consisting of fault-free tilings of an $m\times n$ rectangle with only $k\times 1 $ tiles. To each tiling $T'\in T'_k(m,n)$ we can associate a new tiling by replacing each $k\times k$ tile in $T'$ by $k$ horizontal $k\times 1$ tiles (see Figure~\ref{fig:replace}); it is easily seen that this new tiling is fault-free, and therefore lies in $T_k(m,n)$.
\begin{figure}[h]
\centering
\includegraphics{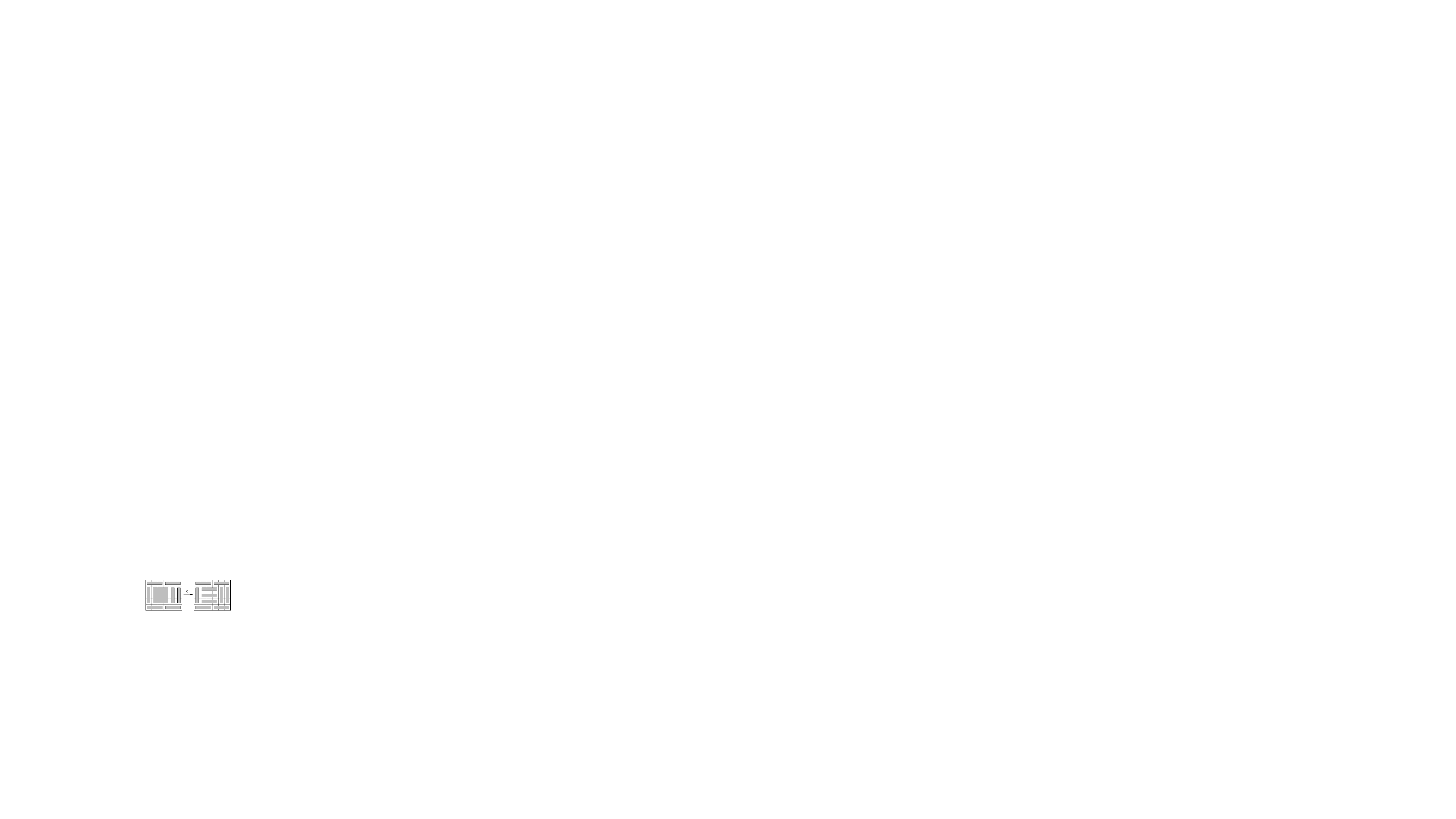} 
\caption{A fault-free tiling and its image under $\Psi$.}   
  \label{fig:replace}
\end{figure}
  This correspondence gives a map $\Psi :T'_k(m,n)\to T_k(m,n)$ which is clearly surjective. In fact the fibers of $\Psi$ are of cardinality $2^{\ell-1}$. To see this note that each $T\in T_k(m,n)$ has precisely $\ell-1$ blocks as defined in Remark \ref{rem:blocks}. The tilings in $\Psi^{-1}(T)$ are precisely those which can be obtained from $T$ by optionally replacing each of the blocks in $T$ by a $k\times k$ tile, for a total of $2^{\ell-1}$ choices. In summary, we have
\begin{align*}
  a'(m,k\ell)=
  \begin{cases}
    2m-2k+3 & \ell=1,\\
    2^{\ell-1}(m-k+1){\ell+k-3 \choose \ell-1} & \ell>1. 
  \end{cases}
\end{align*}
The generating function for fault-free tilings is therefore
\begin{align*}
  A(x) = (m-k+2)x^k + (m-k+1) \frac{x^k}{(1-2x^k)^{k-1}}.
\end{align*}
The theorem now follows from Lemma \ref{lem:gfs}.
\end{proof}
% The following result on 3 dimensional tilings is now an easy consequence of the above result.
The next result extends the above theorem by accounting for the number of tiles of a given type and orientation.
\begin{theorem} \label{th:vert2}
    Suppose $k<m<2k$ and let $c_k(m,n,r,s)$ denote the number of tilings of an $m\times n$ rectangle with $k\times 1$ and $k\times k$ tiles which contain precisely $r$ vertically placed $k\times 1$ tiles and $s$ square tiles. Then
    \begin{align}
      \sum_{n,r,s\geq 0}c_k(m,n,r,s)x^ny^rz^s=\hspace*{3in} \nonumber\\ \frac{(1-x^k-x^kz)^{k-1}}{\left(1-x^k-(m-k+1)x^kz\right)(1-x^k-x^kz)^{k-1}-(m-k+1)x^ky^k}.\label{eq:bigeq}
    \end{align}  
  \end{theorem}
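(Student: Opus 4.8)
The plan is to follow the same fault-based strategy used in Theorems \ref{th:main1}, \ref{th:vert}, and \ref{th:main2}: compute a refined trivariate generating function $A(x,y,z)$ for \emph{fault-free} tilings, where $x$ marks the width $n$, $y$ marks vertical $k\times 1$ tiles, and $z$ marks $k\times k$ square tiles, and then invoke the multivariate analogue of Lemma \ref{lem:gfs}. First I would observe that Lemma \ref{lem:gfs} carries over verbatim once $y$ and $z$ are adjoined: faults decompose a tiling uniquely into fault-free pieces, and both the number of vertical tiles and the number of square tiles are additive across these pieces, so conditioning on the least fault exactly as before gives $C(x,y,z)=1/(1-A(x,y,z))$, where $C$ denotes the left-hand side of \eqref{eq:bigeq}. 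The whole problem thus reduces to determining $A(x,y,z)$.

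To compute $A(x,y,z)$ I would reuse the fiber analysis of the map $\Psi$ from the proof of Theorem \ref{th:main2} together with the block structure of Remark \ref{rem:blocks}. For $\ell=1$ the fault-free tilings of an $m\times k$ rectangle split into the single all-horizontal tiling (contributing $x^k$), the $m-k+1$ tilings containing one square tile (contributing $(m-k+1)x^kz$), and the $m-k+1$ tilings with precisely $k$ vertical tiles (contributing $(m-k+1)x^ky^k$). For $\ell>1$, every fault-free $k\times 1$ tiling of an $m\times k\ell$ rectangle has exactly $k$ vertical tiles by Corollary \ref{cor:fft} and exactly $\ell-1$ blocks by Remark \ref{rem:blocks}; since each block may independently be left as horizontal tiles or replaced by a square \emph{without} altering the vertical count, the fiber over such a tiling contributes a factor $y^k(1+z)^{\ell-1}$. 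Hence the $\ell>1$ part of $A(x,y,z)$ is $\sum_{\ell\ge 2}a(m,k\ell)\,x^{k\ell}y^k(1+z)^{\ell-1}$, with $a(m,k\ell)$ as in Proposition \ref{prop:fft}.

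The remainder is a routine summation. Writing $p=\ell-1$ and using $\sum_{p\ge 0}\binom{k+p-2}{p}w^p=(1-w)^{-(k-1)}$ with $w=x^k(1+z)$, the $\ell>1$ contribution becomes $(m-k+1)x^ky^k\big[(1-x^k-x^kz)^{-(k-1)}-1\big]$. Adding the $\ell=1$ terms, the stray $(m-k+1)x^ky^k$ cancels against the $-1$ produced by the series, leaving
\begin{align*}
A(x,y,z)=x^k+(m-k+1)x^kz+\frac{(m-k+1)x^ky^k}{(1-x^k-x^kz)^{k-1}}.
\end{align*}
Substituting this into $C=1/(1-A)$ and clearing the common denominator $(1-x^k-x^kz)^{k-1}$ then yields \eqref{eq:bigeq} at once; as a sanity check, setting $z=0$ and $y=1$ should recover the one-variable specialization consistent with Theorem \ref{th:vert}, while $y=1$ alone should return the generating function of Theorem \ref{th:main2}.

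I do not anticipate a genuine obstacle: the argument is essentially an overlay of the vertical-tile refinement of Theorem \ref{th:vert} onto the square-tile fiber count of Theorem \ref{th:main2}, and the two statistics decouple because the vertical count is frozen at $k$ throughout the range $\ell>1$. The one point that demands care is the bookkeeping in the $\ell=1$ case — in particular recognizing that an $m\times k$ rectangle admits $m-k+1$ distinct single-square placements, unlike the rigidly determined $\ell-1$ block positions when $\ell>1$ — which is precisely why the $\ell=1$ contribution must be tabulated separately before the cancellation that produces the clean closed form.
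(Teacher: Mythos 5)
Your proposal is correct and follows essentially the same route as the paper: you compute the trivariate fault-free generating function $A(x,y,z)$ from the $\ell=1$ case analysis together with Corollary \ref{cor:fft}, Remark \ref{rem:blocks} and the fibers of $\Psi$, arrive at the same expression $A(x,y,z)=x^k+(m-k+1)x^kz+(m-k+1)x^ky^k(1-x^k-x^kz)^{-(k-1)}$, and apply the multivariate form of Lemma \ref{lem:gfs}. Only your closing sanity check is slightly off: recovering Theorem \ref{th:main2} requires the substitution $y=z=1$, not $y=1$ alone.
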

  \begin{proof}
    We argue as in the proof of Theorem \ref{th:vert}. If $C(x,y,z)$ denotes the generating function for $c_k(m,n,r,s)$, then  
    $$
C(x,y,z)=\frac{1}{1-A(x,y,z)},
$$
where $A(x,y,z)$ is computed from the discussion in the proof of Theorem \ref{th:main2} as
\begin{align*}
  A(x,y,z)&= x^k(1+(m-k+1)z+(m-k+1)y^k)\\
  &\qquad+\sum_{\ell\geq 2}(m-k+1){\ell+k-3 \choose \ell-1}x^{k\ell}y^k(1+z)^{\ell-1}\\
  &=x^k(1+(m-k+1)z)\\
             &\qquad +\sum_{\ell\geq 1}(m-k+1){\ell+k-3 \choose \ell-1}y^kx^{k\ell}(1+z)^{\ell-1}\\
    &=x^k(1+(m-k+1)z)+\frac{(m-k+1)x^ky^k}{(1-x^k-x^kz)^{k-1}}.\qedhere
\end{align*}
\end{proof}
 Note that if we set $z=0$ in Theorem~\ref{th:vert2} then we obtain Theorem~\ref{th:vert} while the substitution $y=z=1$ yields Theorem \ref{th:main2}.
\begin{theorem}
  \label{th:3d}
  Suppose $k<m<2k$ and let $\bar{h}(m,n)$ denote the number of tilings of an $m\times n\times k$ cuboid with $k\times k\times 1$ bricks. Then
  \begin{align*}
\sum_{n\geq 0}\bar{h}(m,n)x^n = \frac{(1-2x^k)^{k-1}}{(1-2x^k)^{k-1}[1 - (m-k+2)x^k] - (m-k+1)x^k}.
  \end{align*}
\end{theorem}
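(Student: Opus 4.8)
The plan is to prove the sharper combinatorial identity $\bar h(m,n)=h'(m,n)$ for every $n$, where $h'(m,n)$ is the number of tilings of the $m\times n$ rectangle by $k\times 1$ and $k\times k$ tiles enumerated in Theorem~\ref{th:main2}; since the generating functions in the two statements are literally identical, the theorem then follows at once from Theorem~\ref{th:main2}. To set this up I would orient the cuboid so that its sides of lengths $m$, $n$ and $k$ run along the $y$-, $x$- and $z$-axes respectively, in keeping with the conventions of Section 2. A $k\times k\times 1$ brick then has exactly three orientations: thin along $x$ (parallel to the $yz$-plane), thin along $y$ (parallel to the $xz$-plane), or thin along $z$ (parallel to the $xy$-plane). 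The first two orientations span the full height $k$ in the $z$-direction, whereas the third occupies a single $z$-level.

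The central structural observation concerns the vertical columns. For each cell $(i,j)$ of the $m\times n$ base, consider the column of unit cubes lying above it. If any brick meeting this column is parallel to the $yz$- or $xz$-plane, then that brick spans the entire height $k$ and must be the unique brick covering the column; otherwise every brick meeting the column is parallel to the $xy$-plane and, being one cube thick, exactly $k$ of them stack up to fill it. This dichotomy partitions the base into a set $V$ of columns filled by a single height-spanning brick and a set $W$ of columns filled by stacks of $xy$-parallel bricks. Projecting onto the base, the height-spanning bricks over $V$ become horizontal or vertical $k\times 1$ tiles, while at each of the $k$ levels the $xy$-parallel bricks restrict to a tiling of the region $W$ by $k\times k$ tiles.

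The map I would use sends a brick tiling to its bottom-level slice, read as an $m\times n$ tiling by $k\times 1$ tiles (coming from $V$) and $k\times k$ tiles (coming from $W$). The heart of the argument, and the step I expect to be the main obstacle, is showing this map is a bijection, for which it suffices to prove that $W$ admits \emph{at most one} tiling by $k\times k$ tiles. I would establish this by a greedy forcing argument: the tile covering the lexicographically smallest cell of $W$ (least column, then least row) cannot protrude outside $W$ and is therefore pinned to that corner; deleting it and inducting shows the tiling is unique whenever it exists. Consequently the $k$ level-tilings of $W$ all coincide, the stacks over $W$ collapse into genuine $k\times k$ tiles, and the bottom-level map is well defined with a forced inverse (each $k\times 1$ tile lifts back to a height-spanning brick, and each $k\times k$ tile to a stack of $k$ $xy$-parallel bricks). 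This gives $\bar h(m,n)=h'(m,n)$, and Theorem~\ref{th:main2} finishes the proof. I would also remark that tracking brick orientations through this bijection — each square tile corresponding to $k$ stacked $xy$-bricks, so that $z\mapsto z^k$ — is precisely what upgrades Theorem~\ref{th:vert2} to the refined three-dimensional count.
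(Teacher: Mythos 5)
Your proposal is correct and takes essentially the same route as the paper: both set up a bijection between brick tilings of the cuboid and tilings of the $m\times n$ rectangle by $k\times 1$ and $k\times k$ tiles (height-spanning bricks giving the $k\times 1$ tiles, stacks of $k$ flat bricks giving the $k\times k$ tiles) and then invoke Theorem~\ref{th:main2}. Your explicit lemma that a region admits at most one tiling by $k\times k$ squares, proved by pinning the tile at the lexicographically least cell, supplies a careful justification for the alignment of the stacked flat bricks, a point the paper asserts without detailed proof.
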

\begin{proof}
  Consider a cuboid in the positive octant with a corner at the origin and with its sides of length $n,m$ and $k$ along the $x,y$ and $z$-axes respectively. Let $\tau$ be a tiling of this cuboid by $k\times k\times 1$ bricks. The bricks of the tiling which touch the $xy$-plane determine a tiling $\tau'$ of an $m\times n$ rectangle with $k\times 1$ and $k\times k$ tiles as shown in Figure \ref{fig:3dproj}.  
  \begin{figure}[!h]
    \centering
    \includegraphics{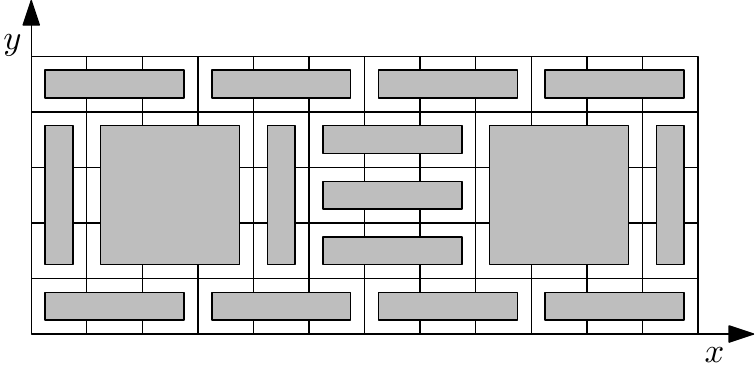}  
    \caption{A tiling $\tau'$ for $m=5,n=12$ and $k=3$.}
    \label{fig:3dproj}
  \end{figure}
 In fact $\tau$ is uniquely determined by $\tau'$ as follows. The tiles in $\tau'$ can be seen to correspond to the projections of the bricks in $\tau$ onto the $xy$-plane: each $k\times 1$ tile of $\tau'$ is the projection of a single brick of $\tau$ while each $k\times k$ tile of $\tau'$ is the projection of precisely $k$ bricks in $\tau$. This gives a one-one correspondence between tilings of the cuboid by $k\times k\times 1$ bricks and tilings of an $m\times n$ rectangle by tiles of size $k\times 1$ and $k\times k$. The result now follows from Theorem~\ref{th:main2}. 
\end{proof}
Replacing $z$ by $z^k$ in the generating function \eqref{eq:bigeq}, the following result is obtained.
\begin{corollary}\label{cor:vert3d}
    Suppose $k<m<2k$ and let $d_k(m,n,r,s)$ denote the number of tilings of an $m\times n\times k$ cuboid with $k\times k\times 1$ bricks which contain precisely $r$ bricks parallel to the $yz$-plane and $s$ bricks parallel to the $xy$-plane. Then
    \begin{align*}
      \sum_{n,r,s\geq 0}d_k(m,n,r,s)x^ny^rz^s=\hspace*{3in} \nonumber\\ \frac{(1-x^k-x^kz^k)^{k-1}}{\left(1-x^k-(m-k+1)x^kz^k\right)(1-x^k-x^kz^k)^{k-1}-(m-k+1)x^ky^k}.\label{eq:bigeq}
    \end{align*}  
\end{corollary}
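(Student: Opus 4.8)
The plan is to reuse the bijection between cuboid tilings and planar tilings established in the proof of Theorem~\ref{th:3d}, and to keep track of how the three possible orientations of a $k\times k\times 1$ brick correspond to the tile types appearing in the associated rectangle tiling $\tau'$. The whole statement should then follow by the substitution $z\mapsto z^k$ applied to the generating function of Theorem~\ref{th:vert2}.

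First I would fix the coordinate convention of Theorem~\ref{th:3d}, with the sides of length $n,m,k$ lying along the $x,y,z$-axes respectively, and classify the orientations of a brick. A brick whose unit-thickness direction is along the $x$-axis has its $k\times k$ face parallel to the $yz$-plane and projects to a vertical $k\times 1$ tile of $\tau'$; a brick with thickness along the $y$-axis is parallel to the $xz$-plane and projects to a horizontal $k\times 1$ tile; and a brick with thickness along the $z$-axis is parallel to the $xy$-plane. Since the cuboid has height exactly $k$, bricks of this last type can only occur in vertical stacks of $k$, and each such stack projects to a single $k\times k$ square tile of $\tau'$.

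Next I would translate the brick statistics of $d_k$ into the tile statistics of $c_k$ from Theorem~\ref{th:vert2}. The number $r$ of bricks parallel to the $yz$-plane is exactly the number of vertical $k\times 1$ tiles of $\tau'$, so the marker $y$ plays the same role as in \eqref{eq:bigeq}; horizontal $k\times 1$ tiles (bricks parallel to the $xz$-plane) carry no marker, again as in Theorem~\ref{th:vert2}. The number $s$ of bricks parallel to the $xy$-plane, however, is $k$ times the number of square tiles of $\tau'$, since each square tile lifts to a stack of $k$ bricks. Writing $c_k(m,n,r,s')$ for the count of planar tilings with $r$ vertical and $s'$ square tiles, I would conclude that $d_k(m,n,r,ks')=c_k(m,n,r,s')$ while $d_k(m,n,r,s)=0$ unless $k\mid s$.

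Finally I would pass to generating functions: the relation $d_k(m,n,r,ks')=c_k(m,n,r,s')$ shows that $\sum d_k(m,n,r,s)\,x^ny^rz^s$ is obtained from the series in \eqref{eq:bigeq} by replacing $z$ with $z^k$, which produces exactly the claimed formula. The one point I would verify most carefully, and the only real subtlety, is the bookkeeping that turns the square-tile marker $z$ into the brick marker $z^k$; this rests on the fact, already proved in Theorem~\ref{th:3d}, that every $k\times k$ tile is the projection of precisely $k$ bricks. Granting that fact, the remainder is a routine substitution.
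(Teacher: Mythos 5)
Your proposal is correct and matches the paper's approach exactly: the paper also obtains the corollary by substituting $z\mapsto z^k$ in the generating function of Theorem~\ref{th:vert2}, justified by the bijection of Theorem~\ref{th:3d} under which each square tile is the projection of $k$ bricks parallel to the $xy$-plane and each vertical tile is the projection of one brick parallel to the $yz$-plane. Your write-up simply makes explicit the orientation bookkeeping that the paper leaves implicit.
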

\section{Acknowledgements}
The second author was partially supported by a MATRICS grant MTR/2017/000794 awarded by the Science and Engineering Research Board.
\bibliographystyle{plain}

\begin{thebibliography}{10}

\bibitem{MR3409342}
Federico Ardila.
\newblock Algebraic and geometric methods in enumerative combinatorics.
\newblock In {\em Handbook of enumerative combinatorics}, Discrete Math. Appl.
  (Boca Raton), pages 3--172. CRC Press, Boca Raton, FL, 2015.

\bibitem{MR2483235}
Philippe Flajolet and Robert Sedgewick.
\newblock {\em Analytic combinatorics}.
\newblock Cambridge University Press, Cambridge, 2009.

\bibitem{MR739603}
J.~L. Hock and R.~B. McQuistan.
\newblock A note on the occupational degeneracy for dimers on a saturated
  two-dimensional lattice space.
\newblock {\em Discrete Appl. Math.}, 8(1):101--104, 1984.

\bibitem{MR153427}
P.~W. Kasteleyn.
\newblock Dimer statistics and phase transitions.
\newblock {\em J. Mathematical Phys.}, 4:287--293, 1963.

\bibitem{MR588907}
David Klarner and Jordan Pollack.
\newblock Domino tilings of rectangles with fixed width.
\newblock {\em Discrete Math.}, 32(1):45--52, 1980.

\bibitem{MR248643}
David~A. Klarner.
\newblock Packing a rectangle with congruent {$n$}-ominoes.
\newblock {\em J. Combinatorial Theory}, 7:107--115, 1969.

\bibitem{mathar2014}
R.~J. Mathar.
\newblock Tilings of rectangular regions by rectangular tiles: Counts derived
  from transfer matrices, 2014.
\newblock arXiv:1406.7788.

\bibitem{MR798013}
Richard~P. Stanley.
\newblock On dimer coverings of rectangles of fixed width.
\newblock {\em Discrete Appl. Math.}, 12(1):81--87, 1985.

\bibitem{Stanley2012}
Richard~P. Stanley.
\newblock {\em Enumerative combinatorics. {V}olume 1}, volume~49 of {\em
  Cambridge Studies in Advanced Mathematics}.
\newblock Cambridge University Press, Cambridge, second edition, 2012.

\bibitem{MR136398}
H.~N.~V. Temperley and Michael~E. Fisher.
\newblock Dimer problem in statistical mechanics---an exact result.
\newblock {\em Philos. Mag. (8)}, 6:1061--1063, 1961.

\end{thebibliography}

\end{document}